\renewcommand\eqref[1]{(\ref{#1})} 
\numberwithin{equation}{section}
\theoremstyle{plain}
\newtheorem{thm}{Theorem}[section]
\newtheorem{cor}[thm]{Corollary}
\theoremstyle{definition}
\begin{document}

   \title[A comparison principle for nonlinear heat Rockland operators]
   {A comparison principle for nonlinear heat Rockland operators on graded groups}

\author[M. Ruzhansky]{Michael Ruzhansky}
\address{
  Michael Ruzhansky:
  \endgraf
  Department of Mathematics
  \endgraf
  Imperial College London
  \endgraf
  180 Queen's Gate, London SW7 2AZ
  \endgraf
  United Kingdom
  \endgraf
  {\it E-mail address} {\rm m.ruzhansky@imperial.ac.uk}
  }
 \author[D. Suragan]{Durvudkhan Suragan}
\address{
	Durvudkhan Suragan:
	\endgraf
	Department of Mathematics
	\endgraf
	School of Science and Technology, Nazarbayev University
	\endgraf
	53 Kabanbay Batyr Ave, Astana 010000
	\endgraf
	Kazakhstan
	\endgraf
	{\it E-mail address} {\rm durvudkhan.suragan@nu.edu.kz}
}

\thanks{The authors were supported in parts by the EPSRC
 grant EP/R003025/1 and by the Leverhulme Grant RPG-2017-151,
 as well as by the MESRK grant AP05130981. No new data was collected or generated during the course of research.}

     \keywords{Comparison principle, graded Lie group, Rockland operator, stratified group, p-sub-Laplacian.}
     \subjclass[2010]{35G20, 22E30}

     \begin{abstract}
In this note we show a comparison principle for nonlinear heat Rockland operators
on graded groups. We give a simple proof for it using purely algebraic relations. As an application of the established comparison principle we prove the global in $t$-boundedness of solutions for a class of nonlinear equations for the heat $p$-sub-Laplacian on stratified groups.
     \end{abstract}
     \maketitle

\section{Introduction}

A connected simply connected Lie group $\mathbb{G}$ is called a graded (Lie) group if its Lie algebra admits a gradation. For example, the Euclidean space, the Heisenberg group or any stratified group or a homogeneous Carnot group are examples of graded groups.

The main class of operators that we are dealing with in this paper are the so-called {\em Rockland operators}, the class of operators introduced in \cite{rockland_78} as left-invariant differential operators whose infinitesimal representations are injective on smooth vectors, and conjectured to coincide with hypoelliptic operators.
In this short note it will be much simpler to adopt another equivalent definition following Helffer and Nourrigat's resolution of the Rockland conjecture in \cite{HN-79}. Namely,  we understand by a Rockland operator {\em any left-invariant homogeneous hypoelliptic differential operator on $\mathbb{G}$.} This will allow us to avoid the language of the representation theory, and we can refer to  \cite[Section 4.1]{FR} for a detailed exposition of these equivalent notions.

The considered setting is most general since it is known that if there exists a left-invariant homogeneous hypoelliptic differential operator on a nilpotent Lie group then the group must be graded (see e.g. \cite[Proposition 4.1.3]{FR}, also for a historical discussion). Such group can be always identified with a Euclidean space $\mathbb{R}^N$ for some $N$, with a polynomial group law. We refer to the recent monograph \cite{FR} for definitions and other properties.

Note that the standard Lebesgue measure is the Haar measure for $\mathbb{G}$. 
Let $\Omega  \subset \mathbb{G}$ be a bounded set with smooth boundary $\partial \Omega $ and let $T>0$ be a real number and ${Q_T} = (0,T) \times \Omega.$ 
We denote the Sobolev space by $S^{a,p}(\Omega)=S^{a,p}_{\mathcal{R}}(\Omega)$, for $a > 0$ and $1<p<\infty$, defined by the
norm
\begin{equation}\label{norm}
\|u\|_{S^{a,p}(\Omega)}:=\left(\int_{\Omega} (|\mathcal{R}^{\frac{a}{\nu}} u(x)|^{p} + |u(x)|^{p} )dx\right)^{\frac{1}{p}} ,
\end{equation}
 where $\nu$ is the homogeneous order of the Rockland operator $\mathcal{R}$.
We also define the functional class $S_{0}^{a,p}(\Omega)$ to be the completion of $C_{0}^{\infty}(\Omega)$ in the norm \eqref{norm}.
We refer to \cite[Chapter 4]{FR} or to \cite{FR17} for a general discussion of Sobolev spaces on graded groups.

  In this note we consider the initial boundary value problem 
  \begin{equation} \label{eq:birinshi}
  	\quad\left\{{\begin{array}{*{20}{l}}
  			{{u_t} - {\mathcal{R}^{\frac{a}{\nu}}}\left( {{{\left| {{\mathcal{R}^{\frac{a}{\nu}}}u} \right|}^{p - 2}}{\mathcal{R}^{\frac{a}{\nu}}}u} \right)u =  - \gamma{{\left| u \right|}^{\beta-1} }u + \alpha {{\left| u \right|}^{q - 2}}u,\quad x \in \Omega ,\;{\kern 1pt} t > 0,}\\
  			{u \left( t, x \right) = 0,\quad \;\;\;x \in \partial \Omega ,\;\;\;{\kern 1pt} t > 0,}\\
  			{u\left( 0, x \right) = {u_0}\left( x \right),\quad x \in \Omega ,}
  	\end{array}} \right.
  \end{equation}
  where $p>1,$ $\beta >0,$ $ q\ge 1,$ $\gamma \geq0$, $\alpha \geq0$,  and the initial data is ${u_0}\left( x \right) \ge 0,  {u_0}\left( x \right)\not  \equiv 0,{u_0} \in S_{0}^{a,p}\left( \Omega  \right) \cap {L^\infty }\left( \Omega  \right).$
  
Let 
  \begin{equation}
  	V: = \left\{ {v  \in {L^p}\left( {0,T;S_{0}^{a,p}\left( \Omega  \right)} \right)\left| {{\partial _t}v  \in {L^{p'}}\left( {0,T;{S^{a,p'}}\left( \Omega  \right)} \right)} \right.} \right\},\; \frac{1}{p'}+\frac{1}{p}=1.
  \end{equation}
  
If a function $u \in V \cap C\left( 0,T;{L^2}\left( \Omega  \right) \right)$, with $ u(x) = {u_0}(x)$ for a.e. $x \in \Omega, $ satisfies
  
  \begin{equation}
  	\qquad \iint_{Q_T}{\left( {{\partial _t}u\varphi  + {{\left| {{\mathcal{R}^{\frac{a}{\nu}}}u} \right|}^{p - 2}}{\mathcal{R}^{\frac{a}{\nu}}}u  {\mathcal{R}^{\frac{a}{\nu}}}\varphi  } \right)dxdt}=-\iint_{Q_T}{\left( {{ \gamma{\left| u \right|}^{\beta  - 1}}u - \alpha {{\left| u \right|}^{q - 2}}} \right)\varphi dxdt}
  \end{equation}
  for every nonnegative test-function   $\varphi \in V \cap C\left( {0,T;{L^2}\left( \Omega  \right)} \right)$,
then the function $u$ is called a {\em weak solution} of   \eqref{eq:birinshi}.

If a function $u \in V \cap C\left( 0,T;{L^2}\left( \Omega  \right) \right)$, with $ u(x) \leq {u_0}(x)$ for a.e. $x \in \Omega, $ satisfies
 
 \begin{equation}
 \qquad \iint_{Q_T}{\left( {{\partial _t}u\varphi  + {{\left| {{\mathcal{R}^{\frac{a}{\nu}}}u} \right|}^{p - 2}}{\mathcal{R}^{\frac{a}{\nu}}}u  {\mathcal{R}^{\frac{a}{\nu}}}\varphi  } \right)dxdt} \leq-\iint_{Q_T}{\left( {{\gamma{\left| u \right|}^{\beta  - 1}}u - \alpha {{\left| u \right|}^{q - 2}}} \right)\varphi dxdt}
 \end{equation}
 for every nonnegative test-function   $\varphi \in V \cap C\left( {0,T;{L^2}\left( \Omega  \right)} \right)$,
 then the function $u$ is called a {\em weak sub-solution} of   \eqref{eq:birinshi}.
 
If a function $u \in V \cap C\left( 0,T;{L^2}\left( \Omega  \right) \right)$, with $ u(x) \geq {u_0}(x)$ for a.e. $x \in \Omega, $ satisfies

\begin{equation}
\qquad \iint_{Q_T}{\left( {{\partial _t}u\varphi  + {{\left| {{\mathcal{R}^{\frac{a}{\nu}}}u} \right|}^{p - 2}}{\mathcal{R}^{\frac{a}{\nu}}}u  {\mathcal{R}^{\frac{a}{\nu}}}\varphi  } \right)dxdt} \geq-\iint_{Q_T}{\left( {{\gamma{\left| u \right|}^{\beta  - 1}}u - \alpha {{\left| u \right|}^{q - 2}}} \right)\varphi dxdt}
\end{equation}
for every nonnegative test-function   $\varphi \in V \cap C\left( {0,T;{L^2}\left( \Omega  \right)} \right)$,
then the function $u$ is called a {\em weak sup-solution} of   \eqref{eq:birinshi}.

The goal of this note is to give a simple proof of a comparison principle for the initial boundary value problem for nonlinear heat Rockland operators on graded groups using pure algebraic relations, inspired by the recent work \cite{LZZ}. For thorough analysis of sub-harmonic analysis in related settings see e.g. \cite{BL13}, see also e.g. \cite{BK16} and \cite{BB17} for some related analysis and references therein. We also note that the global in time well-posedness of nonlinear wave equations for Rockland operators have been also recently investigated in \cite{Ruzhansky-Tokmagambetov:JDE}.

In Section \ref{SEC:2} we present the main result of this note and give its short proof. An application to nonlinear heat equations for the $p$-sub-Laplacian is then discussed in Section \ref{Sec3}.

\section{A comparison principle on graded groups}
\label{SEC:2}

The following result is a comparison principle for solutions of \eqref{eq:birinshi}.

\begin{thm}
	\label{main} Let $u$ and $v$ be weak sub-solution and sup-solution of \eqref{eq:birinshi}, respectively. If $u$ and $v$ are locally bounded, then $u\leq v$ a.e. in $Q_T$.
	\end{thm}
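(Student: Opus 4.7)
The plan is to run the standard parabolic comparison argument: subtract the weak sup-solution inequality for $v$ from the weak sub-solution inequality for $u$, test with the positive part $(u-v)_+$, and then combine (i) the algebraic monotonicity of the $p$-Laplacian-type nonlinearity, (ii) the monotonicity of $t\mapsto|t|^{\beta-1}t$, and (iii) local boundedness to Lipschitz-control the remaining lower-order source, and finish with Gronwall.

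First I would subtract the two defining inequalities to obtain, for every nonnegative admissible $\varphi$,
\begin{equation*}
\iint_{Q_T}\partial_t(u-v)\varphi\,dxdt
+\iint_{Q_T}\bigl(|\mathcal{R}^{\frac{a}{\nu}}u|^{p-2}\mathcal{R}^{\frac{a}{\nu}}u-|\mathcal{R}^{\frac{a}{\nu}}v|^{p-2}\mathcal{R}^{\frac{a}{\nu}}v\bigr)\mathcal{R}^{\frac{a}{\nu}}\varphi\,dxdt
\end{equation*}
\begin{equation*}
\leq \iint_{Q_T}\bigl[-\gamma(|u|^{\beta-1}u-|v|^{\beta-1}v)+\alpha(|u|^{q-2}u-|v|^{q-2}v)\bigr]\varphi\,dxdt,
\end{equation*}
and then substitute $\varphi=(u-v)_+$. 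The regularity $u,v\in V\cap C(0,T;L^2(\Omega))$ together with local boundedness should make $(u-v)_+$ admissible.

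The time term integrates to $\tfrac12\|(u-v)_+(T)\|_{L^2}^{2}-\tfrac12\|(u-v)_+(0)\|_{L^2}^{2}$, and the second piece vanishes because $u(0)\leq u_0\leq v(0)$ a.e. by the sub/sup-solution definitions. For the principal term I would invoke the purely algebraic inequality
\begin{equation*}
(|\xi|^{p-2}\xi-|\eta|^{p-2}\eta)(\xi-\eta)\geq 0,\quad \xi,\eta\in\mathbb{R},
\end{equation*}
applied with $\xi=\mathcal{R}^{a/\nu}u$, $\eta=\mathcal{R}^{a/\nu}v$ on the set $\{u>v\}$, where $\mathcal{R}^{a/\nu}(u-v)_+$ coincides with $\mathcal{R}^{a/\nu}(u-v)$; this shows the Rockland contribution is nonnegative, so it can be discarded. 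The $-\gamma$ term on the right has a dissipative sign because $t\mapsto|t|^{\beta-1}t$ is nondecreasing, so it can also be dropped. The $\alpha|u|^{q-2}u$ term is the genuinely delicate one: using local boundedness $|u|,|v|\leq M$ on $Q_T$, the mean-value theorem (with a Hölder refinement if $1\leq q<2$) gives $|u|^{q-2}u-|v|^{q-2}v\leq L(M,q)\,(u-v)_+$ on $\{u>v\}$.

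Putting the pieces together yields
\begin{equation*}
\tfrac12\|(u-v)_+(T)\|_{L^2(\Omega)}^{2}\leq \alpha L(M,q)\int_0^T\|(u-v)_+(s)\|_{L^2(\Omega)}^{2}\,ds,
\end{equation*}
and Gronwall's inequality, with zero initial data, forces $(u-v)_+\equiv 0$, i.e.\ $u\leq v$ a.e.\ in $Q_T$.

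The hard part I anticipate is not the algebra (which is essentially the $p$-Laplacian monotonicity plus Gronwall) but the justification that $(u-v)_+$ is an admissible test function in the Rockland weak formulation. The decomposition $\mathcal{R}^{a/\nu}(u-v)_+=\mathcal{R}^{a/\nu}(u-v)\mathbf{1}_{\{u>v\}}$ is classical for first-order horizontal gradients (and hence transparent in the $p$-sub-Laplacian application of Section \ref{Sec3}), but for a general fractional power $\mathcal{R}^{a/\nu}$ of a higher-order Rockland operator it is not a pointwise chain rule and would need either an interpretation built into the functional setting of $S_0^{a,p}(\Omega)$ or a mollification/truncation approximation followed by a passage to the limit using the $L^\infty$ bounds.
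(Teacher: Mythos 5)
Your proposal matches the paper's proof essentially step for step: the paper likewise tests with $\varphi=\max\{u-v,0\}$, invokes the same algebraic monotonicity inequalities for $s\mapsto|s|^{p-2}s$ (citing Lindqvist) and for $s\mapsto|s|^{\beta-1}s$, absorbs the $\alpha$-term via $L=2\alpha\sup_{u>v}\bigl(\frac{|u|^{q-2}u-|v|^{q-2}v}{u-v}\bigr)$ using local boundedness, and concludes with Gronwall from zero initial data. The one caveat you raise --- justifying that $(u-v)_+$ is admissible and that $\mathcal{R}^{\frac{a}{\nu}}(u-v)_+$ behaves like $\mathcal{R}^{\frac{a}{\nu}}(u-v)\mathbf{1}_{\{u>v\}}$ for a fractional power of a higher-order Rockland operator --- is a genuine subtlety that the paper itself passes over silently, so your write-up is, if anything, the more careful of the two on that point.
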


\begin{proof}
We have the relations (see, e.g. \cite{Lind})
\begin{equation*}
\left\{ \begin{array}{l}
(\left| c \right|^{p-2}c - \left|d\right|^{p-2}d)(c-d)\geq \frac{4}{p^{2}}||d|^{\frac{p-2}{2}}d-|c|^{\frac{p-2}{2}}c|^{2} \geq 0\; \text{if} \;  p\geq 2,\\
(\left| c \right|^{p-2}c - {\left| d  \right|^{d- 2}}d)(c-d) 
= (p-1)|d-c|^{2}\left(1+|c|^{2}+|d|^{2}\right)^{\frac{p-2}{2}} \geq 0\; \text{if} \; 2\geq p \geq 1. 
\end{array} \right.
\end{equation*}
Setting $c=\mathcal{R}^{\frac{a}{\nu}}u$ and $d=\mathcal{R}^{\frac{a}{\nu}}v$ we obtain 
\begin{equation}\label{A}
\iint_{Q_T}\left( {{{\left| {{\mathcal{R}^{\frac{a}{\nu}}}u} \right|}^{p - 2}}{\mathcal{R}^{\frac{a}{\nu}}}u - {{\left| {{\mathcal{R}^{\frac{a}{\nu}}}v} \right|}^{p - 2}}{\mathcal{R}^{\frac{a}{\nu}}}v } \right)\left( {{\mathcal{R}^{\frac{a}{\nu}}}u - {\mathcal{R}^{\frac{a}{\nu}}}v } \right)dxdt\geq 0.
\end{equation}
On the other hand, by using the straightforward inequalities 
\begin{equation*}
\left\{ \begin{array}{l}
{\left| u \right|^{\beta  - 1}}u - {\left| v  \right|^{\beta - 1}}v  = u^{\beta} - v ^{\beta} > 0,\quad \text{for} \quad  u> v  > 0,\\
\left| u \right|^{\beta-1}u - {\left| v  \right|^{\beta- 1}}v 
 = u^{\beta} + \left|v\right|^{\beta} > 0,\quad \text{for} \quad u > 0 > v, \\
{\left| u \right|^{\beta-1}}u - {\left|v\right|^{\beta- 1}}v  =  - \left| u \right|^{\beta} + \left|v\right|^{\beta } > 0,\quad \text{for} \quad 0 > u > v,
\end{array} \right.
\end{equation*}
we get that 
\begin{equation}
 \iint_{Q_T} \left( {{{\left| u \right|}^{\beta  - 1}}u - {{\left| v  \right|}^{\beta  - 1}}v } \right)\varphi \ dxdt \geq 0,
\end{equation}

where $\varphi  := \max\left\{ {u - v ,0} \right\}$, thus $\varphi \left( {0,x} \right) = 0$ and $\varphi \left( {t,x} \right)\left| {_{x \in \partial \Omega }} \right. = 0.$ Therefore, by the definitions of sub- and sup-solutions, we obtain 

$$  \frac{1}{2}\int_{\Omega} \varphi^2(t,x) dx=\iint_{Q_t}\frac{1}{2}\partial _\tau(\varphi^2(\tau,x)) dxd\tau\leq \iint_{Q_t}\varphi{\partial _\tau}\varphi dxd\tau 
$$$$+ \iint_{Q_t}\left( {{{\left| {{\mathcal{R}^{\frac{a}{\nu}}}u} \right|}^{p - 2}}{\mathcal{R}^{\frac{a}{\nu}}}u - {{\left| {{\mathcal{R}^{\frac{a}{\nu}}}v} \right|}^{p - 2}}{\mathcal{R}^{\frac{a}{\nu}}}v } \right)\left( {{\mathcal{R}^{\frac{a}{\nu}}}u - {\mathcal{R}^{\frac{a}{\nu}}}v } \right)dxd\tau$$

$$=\iint_{Q_t}\varphi{\partial _\tau}(u-v)\ dxd\tau 
+ \iint_{Q_t}\left( {{{\left| {{\mathcal{R}^{\frac{a}{\nu}}}u} \right|}^{p - 2}}{\mathcal{R}^{\frac{a}{\nu}}}u - {{\left| {{\mathcal{R}^{\frac{a}{\nu}}}v} \right|}^{p - 2}}{\mathcal{R}^{\frac{a}{\nu}}}v } \right) {{\mathcal{R}^{\frac{a}{\nu}}}\varphi } dxd\tau$$

$$=\iint_{Q_T}{\left( {{\partial _t}u\varphi  + {{\left| {{\mathcal{R}^{\frac{a}{\nu}}}u} \right|}^{p - 2}}{\mathcal{R}^{\frac{a}{\nu}}}u  {\mathcal{R}^{\frac{a}{\nu}}}\varphi  } \right)dxdt}$$
$$-\iint_{Q_T}{\left( {{\partial _t}v\varphi  + {{\left| {{\mathcal{R}^{\frac{a}{\nu}}}v} \right|}^{p - 2}}{\mathcal{R}^{\frac{a}{\nu}}}v  {\mathcal{R}^{\frac{a}{\nu}}}\varphi  } \right)dxdt}$$
$$ \leq - \gamma\iint_{Q_t} \left( {{{\left| u \right|}^{\beta  - 1}}u - {{\left| v  \right|}^{\beta  - 1}}v } \right)\varphi \ dxdt + \alpha \iint_{Q_t}\left( {{{\left| u \right|}^{q - 2}}u - {{\left| v  \right|}^{q - 2}}v } \right)\varphi \ dxd\tau $$ 
 
$$
\leq -\gamma \iint_{Q_t} \left( {{{\left| u \right|}^{\beta  - 1}}u - {{\left| v  \right|}^{\beta  - 1}}v } \right)\varphi \ dxdt + \alpha\ \underset{u>v }{\sup} \left(\frac{ {{{\left| u \right|}^{q - 2}}u - {{\left| v  \right|}^{q - 2}}v } }{u-v}\right) \iint_{Q_t}\varphi^2 dxd\tau 
$$
\begin{equation}
\leq \alpha \ \underset{u>v }{\sup} \left(\frac{ {{{\left| u \right|}^{q - 2}}u - {{\left| v  \right|}^{q - 2}}v } }{u-v}\right) \iint_{Q_t}\varphi^2 dxd\tau,
\end{equation}
that is,
\begin{equation}
\int_{\Omega} \varphi^2(x,t) dx \leqslant L\iint_{Q_t} \varphi^2(x,\tau) dxd\tau,\quad \forall t\in[0,T),
\end{equation}
where $L=2\alpha \ \underset{u>v }{\sup} \left(\frac{ {{{\left| u \right|}^{q - 2}}u - {{\left| v  \right|}^{q - 2}}v } }{u-v}\right).$
Let us recall the Gronwall inequality for the convenience of the readers:
Let $g$ and $f$ be real-valued continuous functions defined on $[0,T)$. If 
$f'(t)\leq g(t)f(t)$ for all $t\in [0,T)$, then
$$f(t)\leq f(0)\exp \int_{0}^{t}g(s)ds,\; \forall t\in [0,T).$$ 

With $f(t)=\iint_{Q_t} \varphi^2(\tau,x) dxd\tau$, from the Gronwall inequality we have $\int_{\Omega} \varphi^2 dx = 0.$ This means that $\varphi=0$ a.e. $x \in \Omega ,$ that is, $u \le v $ a.e. $\left( {t,x} \right) \in {Q_T}.$
\end{proof}

\section{An application to $p$-sub-Laplacian equations}
\label{Sec3}

Let us demonstrate an application of Theorem \ref{main} to stratified Lie groups which is one of important classes of graded groups analysed extensively by Folland \cite{F75}. 
A stratified Lie group can be defined in many different equivalent ways (see, e.g. \cite {BLU07} or \cite{FR} for the Lie group and Lie algebra points of view, respectively). 
A Lie group $\mathbb{G}=(\mathbb{R}^{N},\circ)$ is called a stratified (Lie) group  if the following two conditions are satisfied:

(a) For each $\lambda>0$ the dilation $\delta_{\lambda}: \mathbb{R}^{N}\rightarrow \mathbb{R}^{N}$
given by
$$\delta_{\lambda}(x)\equiv\delta_{\lambda}(x^{(1)},...,x^{(r)}):=(\lambda x^{(1)},...,\lambda^{r}x^{(r)})$$
is an automorphism of the group $\mathbb{G},$ where $x^{(k)}\in \mathbb{R}^{N_{k}}$ for $k=1,...,r$
with $N_{1}+...+N_{r}=N$ and
 $\mathbb{R}^{N}=\mathbb{R}^{N_{1}}\times...\times\mathbb{R}^{N_{r}}.$ 

(b) Let $X_{1},...,X_{N_{1}}$ be the left invariant vector fields on $\mathbb{G}$ such that
$X_{j}(0)=\frac{\partial}{\partial x_{j}}|_{0}$ for $j=1,...,N_{1}.$ 
Then the H{\"o}rmander condition
$${\rm rank}({\rm Lie}\{X_{1},...,X_{N_{1}}\})=N$$
holds for every $x\in\mathbb{R}^{N},$ that is, $X_{1},...,X_{N_{1}}$  with their iterated commutators
 span the whole Lie algebra of $\mathbb{G}.$

The left invariant vector field $X_{j}$ has an explicit form:
\begin{equation}\label{Xk0}
X_{j}=\frac{\partial}{\partial x'_{j}}+
\sum_{l=2}^{r}\sum_{m=1}^{N_{l}}a_{j,m}^{(l)}(x',...,x^{(l-1)})
\frac{\partial}{\partial x_{m}^{(l)}},
\end{equation}
see also \cite[Section 3.1.5]{FR} for a general presentation.
We will also use the following notations
$$\nabla_{H}:=(X_{1},\ldots, X_{N_{1}})$$
for the horizontal gradient,

\begin{equation}\label{pLap}
\mathcal{L}_{p}f:=\nabla_{H}(|\nabla_{H}f|^{p-2}\nabla_{H}f),\quad 1<p<\infty,
\end{equation}
for the (horizontal) subelliptic $p$-Laplacian or, in short, $p$-sub-Laplacian, and

$$|x'|=\sqrt{x'^{2}_{1}+\ldots+x'^{2}_{N_{1}}}$$ for the Euclidean norm on $\mathbb{R}^{N_{1}}.$

The explicit representation \eqref{Xk0} allows us to have, for instance, the identity
\begin{equation}\label{gradgamma}
|\nabla_{H}|x'|^{\gamma}|=\gamma|x'|^{\gamma-1},
\end{equation}
and
\begin{equation}\label{divgamma}
\nabla_{H}\left(\frac{x'}{|x'|^{\gamma}}\right)=\frac{N_{1}-\gamma}{|x'|^{\gamma}}
\end{equation}
for all $\gamma\in\mathbb{R}$, $x'\in \mathbb{R}^{N_{1}}$  and 
$|x'|\neq 0$.
Here and in the sequel $x'$ means `horizontal' part of $x$, that is, $x=(x',x'')\in \mathbb{R}^{N}$, where $x'$ is an $N_{1}$-dimensional vector and $x$ is an $N$-dimensional vector.
The potential theory on stratified groups from the point of view of layer potentials was developed in \cite{Ruzhansky-Suragan:Layers}.

Let $\Omega\subset \mathbb{G}$ be a bounded open set. 

Let us consider the following functional 
$$J_{p}(u):=\left(\int_{\Omega}|\nabla_{H} u|^{p}+| u|^{p}dx\right)^{\frac{1}{p}},$$
and we define the functional class $S^{1,p}_{0}(\Omega)$ to be the completion of $C^{\infty}_{0}(\Omega)$ in the norm generated by $J_{p}$ (cf. \cite{Ruzhansky-Suragan:arxiv}).  

Let us consider the following initial boundary value problem for the $p$-sub-Laplacian, $1<p<\infty$,

 \begin{equation} \label{eq:Lp}
\qquad\left\{{\begin{array}{*{20}{l}}
	{{u_t} - \mathcal{L}_{p}u =  - {{\left| u \right|}^{\beta-1} }u +  {{\left| u \right|}^{q - 2}}u,\quad x \in \Omega ,\;\;{\kern 1pt} t > 0,}\\
	{u(t,x) = 0,\quad x \in \partial \Omega ,\;\;{\kern 1pt} t > 0,}\\
	{u( 0, x ) = {u_0}\left( x \right),\quad \;\;x \in \Omega ,}
	\end{array}} \right.
\end{equation}
where $\beta >0,$ $ q\ge 1$  and the initial data is ${u_0}\left( x \right) \ge 0,  {u_0}\left( x \right)\not  \equiv 0,{u_0} \in S_{0}^{1,p}\left( \Omega  \right) \cap {L^\infty }\left( \Omega  \right).$
  
\begin{cor}\label{thm3.2} 
Let $\Omega\subset \mathbb{G}$ be a bounded open set in a stratified group.
	Let $p\leq q<\beta+1$.
	Then a weak solution of \eqref{eq:Lp} is globally in $t$-bounded.
\end{cor}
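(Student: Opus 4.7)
The strategy is to invoke the comparison principle of Theorem~\ref{main}, applied in its natural adaptation to the $p$-sub-Laplacian, against the explicit time-independent candidate sup-solution
\[
v(t,x) \;\equiv\; M \;:=\; \max\{1,\|u_0\|_{L^\infty(\Omega)}\}.
\]
Since $v$ is constant in $(t,x)$, we have $\partial_t v \equiv 0$ and $\mathcal{L}_p v \equiv 0$, so the sup-solution inequality $\partial_t v - \mathcal{L}_p v \geq -|v|^{\beta-1}v + |v|^{q-2}v$ collapses to the pointwise relation $0 \geq -M^\beta + M^{q-1}$. The hypothesis $q < \beta + 1$ gives $\beta - (q-1) > 0$, hence $M^{\beta-(q-1)} \geq 1$ whenever $M \geq 1$, and the inequality holds. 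Moreover $v(0,x) = M \geq u_0(x)$ and $v \geq 0 = u$ on $\partial\Omega$, so $v$ dominates the weak solution $u$ on the entire parabolic boundary of $Q_T$.

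The algebraic proof of Theorem~\ref{main} transfers verbatim to the present setting: its only ingredients are the scalar-product monotonicity $(|c|^{p-2}c - |d|^{p-2}d)\cdot(c-d) \geq 0$, which is equally valid for vectors $c = \nabla_H u$ and $d = \nabla_H v$ in $\mathbb{R}^{N_1}$, together with the sign properties of $|u|^{\beta-1}u - |v|^{\beta-1}v$ when $u \geq v$. Viewing the weak solution $u$ of \eqref{eq:Lp} as a sub-solution and the constant $v$ above as a sup-solution, the comparison principle delivers $u(t,x) \leq M$ for almost every $(t,x) \in Q_T$. Because $M$ depends only on $u_0$, $p$, $q$, $\beta$ and is independent of $T$, letting $T$ be arbitrary establishes the claimed global-in-$t$ boundedness. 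A matching lower bound $u \geq 0$ follows by comparing $u$ against the trivial sub-solution $\widetilde{v} \equiv 0$.

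The one mild technicality is that the constant $M$ does not belong to $S_0^{1,p}(\Omega)$, so it is not literally an element of the space $V$ used in Section~\ref{SEC:2}. This is harmless: the proof of Theorem~\ref{main} only ever tests against $\varphi := (u - v)_+$, which vanishes on $\partial\Omega$ (since $u|_{\partial\Omega} = 0 \leq M = v|_{\partial\Omega}$) and hence genuinely lies in $V$, after which the energy identity and the Gronwall step carry over unchanged. Justifying this mild relaxation---allowing sup-solutions whose boundary values merely dominate those of the true solution---is the only step requiring genuine care, and is entirely standard in the nonlinear parabolic literature.
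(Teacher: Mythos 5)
Your proposal is correct, but it takes a genuinely different and simpler route than the paper. The paper constructs the exponential barrier $V(t,x)=Le^{\sigma r}$, $r=|x'-x_0'|$, computes $\mathcal{L}_pV$ explicitly via the identities \eqref{gradgamma} and \eqref{divgamma}, and then tunes $\sigma$ and $L$ as in \eqref{3.10}--\eqref{3.11} (this is precisely where the hypothesis $p\le q$ enters) so that $\mathcal{M}_pV\ge 0$, concluding $u\le Le^{\sigma(r'+1)}$. You instead observe that the constant $v\equiv M=\max\{1,\|u_0\|_{L^\infty(\Omega)}\}$ is already a sup-solution: $\partial_t v=0$, $\mathcal{L}_p v=0$, and $M^{q-1}\le M^{\beta}$ since $M\ge 1$ and $q-1<\beta$. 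This eliminates the entire barrier computation, yields the cleaner bound $u\le M$, and in fact never uses $p\le q$, so your argument proves the corollary under the weaker hypothesis $1\le q<\beta+1$ alone; what the paper's exponential construction buys is robustness (it does not rely on the absorption term dominating the source at a constant amplitude, so it would adapt to variants such as a small absorption coefficient $\gamma$), but for the statement as written the constant suffices. You also correctly identify the one technicality the two arguments share: neither your $M$ nor the paper's $Le^{\sigma r}$ vanishes on $\partial\Omega$, so neither lies literally in $V$, and your resolution --- that the proof of Theorem \ref{main} only tests against $\varphi=\max\{u-v,0\}$, which does vanish on the boundary because $u|_{\partial\Omega}=0\le v$ --- is exactly the justification the paper leaves implicit for its own barrier. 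Two minor remarks: applying Theorem \ref{main} still requires the weak solution $u$ to be locally bounded, an assumption both your proof and the paper's take on board silently; and your choice $v\equiv M\ge 1$ actually makes the Gronwall constant in Theorem \ref{main} unproblematic even for $1\le q<2$, since the supremum of the difference quotient is then taken over $u>M\ge 1$, away from the non-Lipschitz point of $s\mapsto|s|^{q-2}s$. Your additional lower bound via the trivial sub-solution $\widetilde v\equiv 0$ is sound (it satisfies the equation with equality and $0\le u_0$), though it is not needed for the upper bound the corollary asserts.
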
 
\begin{proof}	
 Denote $r':=\underset{x=(x',x'')\in \Omega}{\max} |x'|$, then $r' < \infty$ since $\Omega$ is bounded.  For any $x=(x',x'')\in \Omega$, let $x_0=(x'_0,x''_0)\in 	\mathbb{G} \backslash \Omega$ be such that 
	 $\varepsilon \leq |x'_0 -x'| < r'+1$. It is clear that we can take $\varepsilon\in (0,1)$.
	Let 
	\begin{equation}\label{3.6}
	V(t,x) := L e^{\sigma r},\; r = |x'-x'_0|, \; x=(x',x'') \in \Omega,
	\end{equation}
	for some positive $L$ and $\sigma$ to be chosen later.
	Define $$\mathcal{M}_p v:=v_t -\mathcal{L}_{p} v - v^{q-1}+v^{\beta}.$$ 
	By using \eqref{gradgamma} and \eqref{divgamma} let us first calculate 
	
	$$\mathcal{L}_{p}V=
	\nabla_{H}(|\nabla_{H}L e^{\sigma r}|^{p-2}\nabla_{H}L e^{\sigma r})=\nabla_{H}\left(L^{p-2}\sigma^{p-2} e^{\sigma |x'-x'_{0}| (p-2)} L \sigma e^{\sigma |x'-x'_{0}| }\frac{x'-x'_{0}}{|x'-x'_{0}|}\right)$$
	$$=\nabla_{H}\left(L^{p-1}\sigma^{p-1} e^{\sigma |x'-x'_{0}| (p-1)}  \frac{x'-x'_{0}}{|x'-x'_{0}|}\right)
	=L^{p-1}\sigma^{p-1}\sigma(p-1) e^{\sigma |x'-x'_{0}| (p-1)} $$$$+ L^{p-1}\sigma^{p-1}e^{\sigma |x'-x'_{0}| (p-1)}\frac{N_{1}-1}{|x'-x'_{0}|}=(p-1)\sigma^{p} L^{p-1}
	e^{(p-1)\sigma r}+\frac{N_{1}-1}{r}\sigma^{p-1}L^{p-1}e^{\sigma r (p-1)}.$$

	Thus, $V(t,x)$ satisfies 
	\begin{equation*}
	\mathcal{M}_p V = - (p-1)\sigma^{p}L^{p-1} e^{(p-1)\sigma r}- \frac{N_{1}-1}{r} \sigma^{p-1} L^{p-1} e^{(p-1)\sigma r} - L^{q-1}e^{(q-1)\sigma r}+L^{\beta}e^{\beta \sigma r}.
	\end{equation*}
Let us find $\sigma$ and $L$ such that $\mathcal{M}_pV \geq 0$, that is, 
		\begin{equation*}
		(p-1)\sigma^{p} L^{p-1} e^{(p-1)\sigma r}+ \frac{N_{1}-1}{r} \sigma^{p-1}L^{p-1} e^{(p-1)\sigma r}+ L^{q-1}e^{(q-1)\sigma r}  \leq L^{\beta}e^{\beta \sigma r}.
		\end{equation*}
	Multiplying both sides of the inequality by $L^{-p+1} e^{-(p-1)\sigma r}$, we have 
		\begin{equation}\label{3.9}
	(p-1)\sigma^p + \frac{N_{1}-1}{r}\sigma^{p-1} + L^{q-p} e^{(q-p)\sigma r} \leq L^{\beta+1-p}e^{(\beta+1-p)\sigma r}.
	\end{equation}
Since $\varepsilon \leq r< r'+1$, to prove \eqref{3.9} it is sufficient to have 	
	\begin{equation*}
	(p-1)\sigma^p + \frac{N_{1}-1}{\varepsilon}\sigma^{p-1} + L^{q-p} e^{(q-p)\sigma (r'+1)} \leq L^{\beta+1-p}.
	\end{equation*}
Let us choose 
	\begin{equation}\label{3.10}
	\sigma = \frac{1}{(q-p)(r'+1)},
	\end{equation}
	\begin{equation*}
	L = \max \left\{ (2e)^{\frac{1}{\beta+1-q}},\left(2\left((p-1)\sigma^p +\frac{N_{1}-1}{\varepsilon}\sigma^{p-1}\right)\right)^{\frac{1}{\beta+1-p}}\right\}
	\end{equation*}
		if $q>p$, and 

	\begin{equation}\label{3.11}
	\sigma=1, \quad L =\max \left\{ 2^{\frac{1}{\beta+1-q}},\left(2\left(p-1+\frac{N_{1}-1}{\varepsilon}\right)\right)^{\frac{1}{\beta+1-p}} \right\}
	\end{equation}
	if $q=p$.
	Then we have $\mathcal{M}_pV \geq 0$. We can assume that $L \geq \| u_0\|_{L^{\infty}(\Omega)}$, otherwise we can always multiply $L$ by a sufficiently large number. Then $V(0,x)= L e^{\sigma r}\geq u_0(x)$, that is, $V(t,x)$ is a sup-solution of \eqref{eq:Lp}. Therefore, according to Theorem \ref{main}, we have
	\begin{equation}\label{3.12}
	u(t,x) \leq Le^{\sigma(r'+1)}<\infty,\quad r'=\underset{x=(x',x'')\in \Omega}{\max} |x'|.
	\end{equation}
	The right hand side of \eqref{3.12} is independent of $t$, so that  $u(t,x)$ is globally in $t$-bounded.
\end{proof}

\end{document}